\newcommand{\f}{\varphi}
\renewcommand{\gg}{\mathbf{g}}
\renewcommand{\ss}{\mathbf{s}}
\newcommand{\hh}{\mathbf{h}}
\newcommand{\bC}{\mathbf{C}}
\newcommand{\id}{\operatorname{id}}
\newcommand{\sprt}{\operatorname{sprt}}
\newcommand{\dep}[1]{\,=\!\!(#1)}
\newcommand{\Z}{\mathbb{Z}} 
\newcommand{\R}{\mathbb{R}}
\newcommand{\II}{\mathcal{I}}
\newcommand{\XX}{\mathcal{X}}
\newcommand{\YY}{\mathcal{Y}}
\newcommand{\ZZ}{\mathcal{Z}}
\newcommand{\rest}{\!\!\restriction\!\!}
\newtheorem{Thm}{Theorem}
\newtheorem{Lemma}{Lemma}
\theoremstyle{definition}
\newtheorem{Def}{Definition}
\newtheorem{Notation}{Notation}
\newsavebox\mybox
\newmdenv[linecolor=black, outerlinewidth=2, roundcorner=10pt,
backgroundcolor=gray!5]{exclmbox}
\newmdenv[linecolor=black, outerlinewidth=2, roundcorner=10pt,
backgroundcolor=gray!5]{exmplbox}
\newcommand{\MyBox}[1]{%
  \begin{exclmbox}%
    #1
  \end{exclmbox}%
}
\title{$G$-systems and 4E Cognitive Science}
\author{Vadim K. Weinstein}
\begin{document}

\maketitle

\begin{abstract}
  We introduce a class dynamical systems called $G$-systems equipped
  with a coupling operation. We use $G$-systems to define the notions
  of dependence (borrowed from dependence logic) and causality
  (borrowed from Pearl) for dynamical systems. As a converse to
  coupling we define decomposition or ``reducibility''. We give a
  characterization of reducibility in terms of the dependence
  ``atom''. We do all this with the motivation of developing
  mathematical foundations for 4E cognitive science, see introductory
  sections.
\end{abstract}

\section{Philosophical motivation}

Classical cognitive science views the biological body as the
``hardware'' and cognitive processes as a ``software'' which runs on
it. This approach overlooks the deep and intricate entanglement
between the two and the constraints they impose on each
other. Enactivist, embodied, extended, and embedded (4E) approaches to
understanding cognition address this gap.  These theories argue that
cognition is strongly shaped and constrained in non-trivial ways by
the nature of the sensorimotor interactions an agent engages in, or
can potentially engage in
\cite{Gibson1979,Noe2004,HuttoMyin2013,Beer2023}.

\subsection{Example: Tetrapus}

\begin{figure}[t!]
  \centering
  \includegraphics[width=\textwidth]{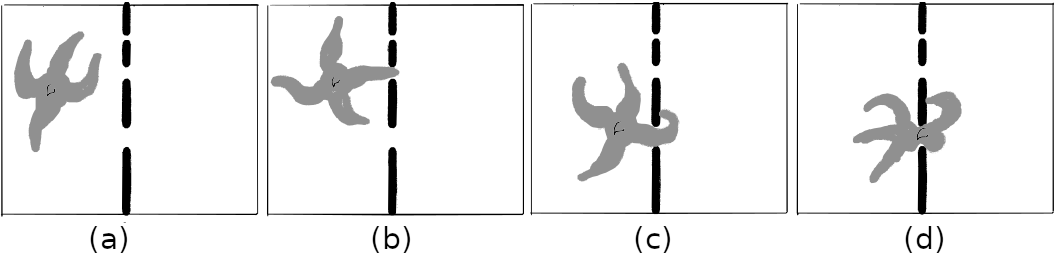}
  \caption{\footnotesize (a) Tetrapus swimming in a container. (b) Palpating the
    wall. Occationally entering holes with a tentacle. (c) If the
    tentacle can bend on the other side, it triggers a pulling
    behaviour. This bending can only happen if the tentacle can go
    deep enough into the hole. (d) Tetrapus ends up pulling itself
    through the hole. Since the thickness of the tentacle matches the
    size of the beak, the hole is of the right diameter and the pull
    is successful. }
  \label{fig:Tetrapus}
\end{figure}

It is known that due to its softness an octopus can fit through a very
narrow opening. As long as the opening is just slightly wider than its
beak, it can fit through because the beak is the only hard part of its
body. In our illustration (Figure~\ref{fig:Tetrapus}) we present a
\emph{tetrapus} (because it is easier to draw). Its tentacles' middle
part, when not squeezed, has the same thickness as the beak. The
tetrapus tries to get to the other side of the wall. It palpates its
surroundings with its tentacles. If a tentacle goes into a hole and
penetrates it so deeply that it can bend on the other side this can be
sensed by the tetrapus and triggers the behaviour of pulling the
entire body through that hole. The bending is possible precisely when
the tentacle is just so deep in the hole that the thick part is
through the hole.

The specific tentacle thickness in conjuction with the appropriate
policies of the tetrapus enable it to be ``fit'' to the environment
and the tasks that are important possibly for its survaval.  Such
fitness is described as \emph{attunement} by the 4E approahces.  

One may be tempted to describe this by saying that
when the tentacle can bend, the octopus ``knows'' that it can fit
through the hole.  But ``knowledge'' is not defined in our model. All
that is defined are processes that are triggered by other processes
modulated by sensory feedback. To ascribe ``knowledge'' of ``being
able to fit through the hole'' to the tetrapus neglects the fact that
there is very little going on in its internal
system. ``Being able to fit through the hole'' is a complex statement
about the geometry of the vast state space which consists of all
possible configurations of the mushy body of the tetrapus in the
given minimal enviroment. Instead, this can be seen as ``outsourced''
by the agent into the geometry of its own tentacles. One may also
conceive of more complex scenarios where the tentacle is less thick
than the beak, so apart from fitting in through the hole, the
tetrapus would need to wiggle the tentacle in it before the pulling
behaviour is triggered. Moreover, in another scenario the pulling
behaviour may not be triggered even when the hole has already been
found. The tetrapus would keep its tentacle in the hole while feeding
on the floating delicacies in the left chamber.  The smell of a
predator would then trigger the pulling behaviour.  Note how the
positioning of the tentacle in the hole replaces ``memory'' for the
agent. These embodied processes are regulared by internal dynamics,
but they work only when these dynamics are coupled with the
appropriate type of environment.
Enactivists often use the term
\emph{attunement} to describe the ``fit'' between the agent's
brain-body system with its environment.

\subsection{A brief introduction to 4E}

Since the publication of \emph{The Embodied
  Mind} \cite{Varela1992} where the term \emph{enactivism} was coined,
4E cognition has been developing into a new paradigm within cognitive
science and philosophy of mind.  It has given rise to many new
concepts to characterize cognition, and new, non-representationalist,
ways of explaining it
in terms of body-environment coupled dynamics, sensorimotor
contingencies, and autonomous attunement to environmental affordances
\cite{van_Gelder1998,ORegan2001,Noe2004,SML2017,Gallagher2017,Beer2023}.
Today \emph{4E cognition} stands for \emph{enactive}, \emph{embedded},
\emph{emergent}, \emph{extended}, \emph{embodied}, and
\emph{ecological}. Similar ideas have been (partially independently)
discovered also in psychology, robotics and neuroscience
\cite{Gibson1979,Brooks1991, PrinzBarsalou2000, Kauffman2000,
  Jordanous2020}. The basic contrast to classical cognitive science is
that enactivism rejects representations as axiomatic in explaining
cognition and proposes instead that the fundamental building blocks
are the processes of regulating sensitivity, motor responce adaptation,
and sensorimotor policy updates. Colloquially we refer
to basic concepts in 4E cognition by \emph{4E concepts} (see Box~1).
All these approaches have in common that cognition
is viewed as the result of sensorimotor coupling between the agent and
the environment.

\MyBox{\textbf{Box 1.}  Some of the key concepts of 4E cognition, or
  \textbf{4E concepts} are: \textbf{autonomy} (the agent sets up its
  own goals), \textbf{attunement} (sensitivity to relevant features),
  \textbf{affordances} (possibilities to engage in behaviours),
  \textbf{emergence} (high-level properties of the agent-environment
  system irreducible to low-level properties), \textbf{situatedness}
  (dependency of cognitive function on the characteristics of
  immediate environment), \textbf{robustness} (the ability to maintain
  a grip / control over the situation) \textbf{relevance} (the ability
  to pay attention to a few select features and ignoring others), and
  \textbf{openness} (the ability to be selectively alert and react to
  unexpected stimuli such as accidentally meeting a friend on the
  street, or the ringing of the phone)}


Philosophers and cognitive scientists have been proposing already for
decades to develop a DST-based formalism for E-cognition. Such a
formalism is expected to ground, unify, and clarify concepts of
enactive and embodied cognitive science as well as create bridges from
it to robotics, AI, and more
\cite{Harvey,van_Gelder1998,Beer2014b,Buhrmann2013,SML2017,Hipolito2022a,
  Newman2024}. So far, the concepts of DST have only been
applied heuristically to talk about and explain 4E concepts 
while a rigorous mathematical theory is lacking.  Using DST concepts
as metaphors to describe 4E concepts
is useful but it does not do the heavy lifting nor grant the true
benefits of an actual formalization..

\subsection{Technical motivation}
\label{ssec:Classical}

\vspace{-1\baselineskip}
\begin{wrapfigure}[12]{R}{0.3\textwidth}
  \begin{tikzpicture}[
    node style/.style={draw, circle, minimum size=1cm, font=\sffamily\Large},
    edge style/.style={draw, -stealth, thick}
  ]
  \node[node style] (Y) at (0, 1) {$Y$}; 
  \node[node style] (I) at (1.6, 0) {$I$};
  \node[node style] (U) at (0, -1) {$U$};
  \node[node style] (X) at (-1.6, 0) {$X$}; 
  
  \draw[->, edge style, bend left=20] (Y) to node[midway, above] {$\varphi$} (I);
  \draw[->, edge style, bend left=20] (U) to node[midway, below] {$f$} (X);
  \draw[->, edge style, bend left=20] (X) to node[midway, above] {$h$} (Y);
  \draw[->, edge style, bend left=20] (I) to node[midway, below] {$\pi$} (U);
\end{tikzpicture}
\caption{Agent-environment dynamics: The classical model}
\label{fig:Transition}
\end{wrapfigure}
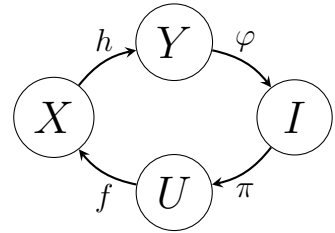

\phantom{a} \newline \textbf{The classical model.} Many authors in
robotics and 4E cognition
\cite{Weinstein_FNR,Weinstein_EMS,Sakcak2024,Weinstein_WAFR,Weinstein_CCN}
use the following formulation of agent-environment dynamics.

The \emph{external state space} is a tuple $(X,U,Y,h,f)$, where the
set $X$ is the state space or \emph{configuration space}. Its points
correspond to all possible configurations of the agent's body in the
environment.  For example, if the agent has the shape of a rod of
length $1$ and the environment is $E\subset\R^2$, then $X$ is the set
of all isometric embeddings of $[0,1]$ into $E$. The set $U$ is the
set of possible motor commands the agent can execute, while
$f\colon X\times U \to X$ is the \emph{transition function},
describing how the configuration evolves with a given motor
command. $Y$ is the set of all possible sensor readings, and
$h\colon X\to Y$ is the \emph{sensor mapping}. The \emph{internal
  state space} is a tuple $(I,\f,U,Y\pi)$, where $I$ is the set of
internal states of the agent, $\f\colon I\times Y\to I$ is the
\emph{internal transition function} that updates the internal state
based on sensor readings, and $\pi\colon I\to U$ is the agent's
\emph{policy}. The coupled system evolves from
$(\iota,x)\in I\times X$ by computing the external state
$x'=f(x,\pi(\iota))$ and then updating the internal state
$\iota'=\f(\iota,h(x'))$.  This model is standard and upto a change in
notation is the same in control theory, reinforcement learning,
minimal models of cognition, as well as the framework of the free
energy principle.

\textbf{Strength of the classical model.} This model is very abstract
and therefore is good for carrying out qualitative analysis of the
properties of agent-environment dynamics. It is good for formulating
abstract theorems and principles governing such coupling such
distinguishability of environments \cite{Weinstein_EMS}. It was used
to begin building a mathematical theory of 4E cognition
\cite{Weinstein_FNR}. $G$-systems retain these strengths while
avoiding the pitfalls describe below.

\textbf{Weaknesses of the classical model.} The state space $X$, even
though it is called the \emph{external state space}, heavily depends
on the embodiment of the agent, as do $f$ and $h$. If the agent
``grows'' a new limb, for example, the new state space $X'$ has little
to do with $X$. In the above example, imagine that the agent is no
longer a rod like $[0,1]$ but constists of two rods which are attached
at a common point which forms a movable joint. Then $U$ changes (now
there is a new motor action controlling the joint), $X$ changes (the
set of embeddings is different), and as a consequence, the domain or
range of $f$, $h$ and $\pi$ change. The model doesn't say anything
about how the new external state space relates to the old one. If the
same agent is placed in another environment, there is no way of
determining the new sensor mapping given the old one. This does not
reflect our intuition. Suppose the sensor is an eye or a camera. It's
functionality is predictable in a new environment as it reacts to the
flow of photons in a predetermined way no matter where it is, and it
will not react to, say, sounds.  Thus, ironically, even though 4E
cognition insists on a strong coupling between the body and the
environment, we are faced with the problem of \emph{separating} the
body from the environment in the mathematical realm. The point is not
philosophical but rather technical. We should be able to put the same
agent in different envrionments or the modification of the same agent
into the same environemnt and keep track of how the state coupled
systems change.

Another drawback is the underspecification of the granular structure
of both the agent and the environment. Thus, the model is
\emph{too abstract}. Such specification is necessary to be able to talk about
subsystems, modularity (visual system, auditive system, higher
cognitive processes etc.), and eventually 4E concepts such as
attunement, affordances, and the like (see Box~1). For example, we
might want to say that some \emph{component} of the internal state
space is sensitive to some \emph{aspect} of the environment while some
other component is not sensitive to the same aspect. We want to have a
systematic way of saying that $h$ (the sensor mapping) detects
this-and-this \emph{property} of the environment but is ``blind'' to
another property.  Every mathematical model abstracts away some
features and retains others. From the 4E perspective, this particular
model selects the ``wrong'' set of featues.

\textbf{Example.} A simple example illustrates how we can overcome
these problems.  Conaider a set of $n$ spherical rigid bodies in
$\R^3$ equipped with Newtonian mechanics.  A state of the system is
given by $e=(p_i,m_i,v_i,r_i,F_i)_{i=1}^n$ where we have in order the
position, mass, velocity, and radius of the $i$:th particle, and
$F_{i}$ is the force acting on that particle.  Denote the set of all
states by $E$. Suppose the \emph{physics} of this (discrete-time)
system is given by the function $\beta\colon E\to E$.  The forces
$F_i$ are completely determined by the positions and the masses of the
particles and are given by $\beta$.  Let us now assume that one of the
particles is in fact \emph{the agent}. W.l.o.g. suppose it is the
particle corresponding to $i=1$. Suppose the current state of the
environment is $e=(p_i,m_i,v_i,r_i,F_i)_{i=1}^n.$ Then the next state
is given by $e'=\beta(p_i,m_i,v_i,r_i,F'_i)_{i=1}^n$ where $F'_i=F_i$
if $i>1$ and for $i=1$ we set $F_i'=F_i+F$ where $F$ is the force
applied by the agent at that time. The agent, in turn, is modelled as
being in a state $a=(b,F)$ where $b$ is a hidden variable (``the
brain''), and $F$ is the applied force. Let $A$ be the set of all such
states.  The internal dynamics of the agent is given by some function
$\alpha\colon A\to A$. If the agent is in a state $(b,F)$, the next
state will be given by $\alpha(b,F+F_1)$ where $F_1$ is the force
which is externally applied to the agent's body at that time. This
agent might be described as ``sensing'' the environment's applied
force because, at least in principle, its brain state can depend on
it. This leads to coupled dynamics of the set of states of the form
$(b,F,p_i,m_i,v_i,r_i)$ with a transition function we will denote
by $\alpha*\beta$.

\textbf{Why is this important?} %
This toy example, of course, has nothing new to it in terms of
mathematics or physics. But it neatly separates the dynamics of the
embodied agent from the dynamics of the environment while
simultaneously coupling those two through the addition of the forces.
Changing the embodiment of the agent could mean, for example, that the
agent consists of two particles instead of one, or a particle with a
different radius. This does not influence the underlying physics of
the environment which can still be presented using $\beta$. On the other
hand the same agent can now be placed in a different environment
(different number of particles, different masses, or even different
physics~$\beta'$), as long as the basic principle of additive forces
remains. In fact, even the
principle of additive forces can change.  Just replace the ``$+$''
sign with another operation. The agent can still be presented as
$(B,\alpha)$. The key difference in this example to the classical model
is that the environment is presented \emph{parametrically} with an
underlying set of \emph{rules} on which the dynamics is based instead
of declaring the dynamics as an abstract entity. This shift is
  akin to moving from abstract functions to formulas which is a
familiar trick to the logicians. The formula $a+a+b$ can be
\emph{evaluated} in any group $(G,+)$ once $a,b\in G$ are
given. Compare this to the situation where we abstractly give a
function $f\colon G\times G\to G$ for some group $G$. Then, it is not
clear how this function ``generalizes'' to another group $G'$. This is
the \textbf{power} of $G$-systems as compared to the classical
approach described above.

\section{$G$-systems and coupling}

A \emph{magma} is a pair $(G,\bullet)$ if
$\bullet\colon G\times G\to G$ is any funtion. Typically for us,
$(G,\bullet)$ will be either an abelian group or a lattice with
$\bullet$ representing either the meet operation (the greatest lower
bound) or the join. Given a magma $G=(G,\bullet)$ and a set $X$, a
\emph{$G$-dynamical system}, or $G$-system for short, is a triple
$(G,X,\alpha)$ where $\alpha\colon G^X\to G^X$ is the \emph{transition
  function} and $G^X$ is the product space consisting of all functions
from $X$ to~$G$. This is clearly a very general model of a
discrete-time dynamical system.  One might ask, what about continuous
time systems which are, after all, ubiquotous both in the modelling of
E-cognition and robotics? Most qualitative concepts such as
attractors, decompositions, couplings, and, as we will see, enactivist
concepts such as attunement and affordances can be formulated in the
discrete-time scenario in a way which is easily generalizable to
continuous time systems. The upside of discrete systems, however, is
that one can avoid (for a longer time) talking about measures,
$\sigma$-algebras, thinking about which norm or metric to choose
etc.~\cite{Weinstein_EMS}. We will also see that the discrete-time
system is more suitable for logical analysis. As a result we have
chosen to study discrete-time systems in SYSCOG, because we see them
as a better platform for formulating most concepts and theorems before
generalizing them to the continuous setting rather than vice versa.

If $G$ is clear from the context or fixed, drop it from the notation
and consider $G$-systems as paris $(X,\alpha)$. Let $X$, $Y$, and $Z$
be such that $X\cup Y=Z$, and let $\XX=(X,\alpha)$ and
$\YY=(Y,\beta)$ be $G$-systems.  Then $\ZZ=(Z,\gamma)$ is the
\emph{coupling} between $\XX$ and $\YY$, denoted $\ZZ=\XX*\YY$, if
$\gamma\colon G^Z\to G^Z$ satisfies the equation
$$\gamma(\gg)(z)=
\begin{cases}
  \alpha(\gg)(z),&\text{ if }z\in X\setminus Y\\
  \alpha(\gg)(z)\bullet\beta(\gg)(z),&\text{ if }z\in X\cap Y\\
  \beta(\gg)(z),&\text{ if }z\in Y\setminus X.
\end{cases}
$$
In this case we also denote
$\gamma=\alpha*\beta$. 
If $(G,\bullet)$ is associative, then so is $*$, and if $\bullet$ is
commutative, then so is~$*$.  If $\bullet$ is the operation on $G$,
abusing notation, use the same symbol to denote the pairwise operation
on $G^X$ for any~$X$.  We have the following observation (our first
``result''):

\begin{Lemma}
  Suppose that $(G,\bullet)$ is associative and commutative.
  Let $X,Y$ be some sets, $\alpha,\alpha'\colon G^X\to G^X$,
  and $\beta,\beta'\colon G^Y\to G^Y$. Then
  $$(\alpha\bullet \alpha')*(\beta\bullet\beta')=(\alpha*\beta)\bullet(\alpha'*\beta').$$
\end{Lemma}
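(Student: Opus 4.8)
The plan is to verify the identity pointwise. Both sides are maps $G^Z\to G^Z$ with $Z=X\cup Y$, so it suffices to fix an arbitrary $\gg\in G^Z$ and an arbitrary $z\in Z$ and show that the two sides produce the same value at $z$. Throughout, when a map defined on $G^X$ is fed an element $\gg\in G^Z$, it is understood to act on the restriction $\gg\rest X$ (and similarly for $Y$); since both sides treat these restrictions identically, I would suppress them and simply write $\alpha(\gg)(z)$, $\beta(\gg)(z)$, and so on.

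First I would unfold the definitions. Writing $A=\alpha\bullet\alpha'$ and $B=\beta\bullet\beta'$, the left-hand side is the coupling $A*B$, whose value at $z$ is given by the three-case formula defining $*$, where $A(\gg)(z)=\alpha(\gg)(z)\bullet\alpha'(\gg)(z)$ and $B(\gg)(z)=\beta(\gg)(z)\bullet\beta'(\gg)(z)$ by the pairwise operation. For the right-hand side I would set $P=\alpha*\beta$ and $Q=\alpha'*\beta'$, so that $(P\bullet Q)(\gg)(z)=P(\gg)(z)\bullet Q(\gg)(z)$, and expand each of $P,Q$ through the same defining three-case formula of $*$.

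Next I would split into the three cases given by the partition $Z=(X\setminus Y)\sqcup(X\cap Y)\sqcup(Y\setminus X)$. In the two boundary cases the equality is immediate and uses neither hypothesis: for $z\in X\setminus Y$ both sides collapse to $\alpha(\gg)(z)\bullet\alpha'(\gg)(z)$, and for $z\in Y\setminus X$ both collapse to $\beta(\gg)(z)\bullet\beta'(\gg)(z)$. The only case with genuine content is $z\in X\cap Y$. Abbreviating $a=\alpha(\gg)(z)$, $a'=\alpha'(\gg)(z)$, $b=\beta(\gg)(z)$, $b'=\beta'(\gg)(z)$, the left-hand side becomes $(a\bullet a')\bullet(b\bullet b')$ while the right-hand side becomes $(a\bullet b)\bullet(a'\bullet b')$.

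The heart of the argument, and the only place the hypotheses are used, is the equality $(a\bullet a')\bullet(b\bullet b')=(a\bullet b)\bullet(a'\bullet b')$, which amounts to transposing the two middle factors: associativity lets me drop all parentheses and commutativity lets me swap $a'$ and $b$. I do not expect any real obstacle here; the lemma is essentially the observation that $*$ distributes over the pointwise $\bullet$, and the entire difficulty is concentrated in this single reordering of four elements of $G$. It is worth remarking that this central case is exactly why both associativity and commutativity are required, whereas the boundary cases would go through for an arbitrary magma.
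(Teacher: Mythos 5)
Your proof is correct and is exactly the "checking the definitions" argument the paper gives (the paper states no more than that): both sides are expanded via the three-case definition of $*$, the boundary cases are trivial, and the case $z\in X\cap Y$ reduces to rearranging $(a\bullet a')\bullet(b\bullet b')=(a\bullet b)\bullet(a'\bullet b')$ using associativity and commutativity. Nothing further is needed.
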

\begin{proof}
  Checking the definitions.
\end{proof}

\begin{Def}
  If $A$ is a set, $f\colon A\to A$ is a function, and
  $C\subset A$ satisfies $f[C]\subseteq C$, we say
  that $C$ is \emph{closed under $f$}.
  If $D$ is another set and
  $f\colon A\times D\to A$, $C\subset A$ is \emph{closed
  under} $f$, if $f[C\times D]\subset C$.
\end{Def}

\begin{Def}
  Suppose $A\subset B$. Then $G^A$ acts on $G^B$ by tranlation
  $g\cdot h\mapsto g+h$. 
\end{Def}

Fix a magma $(G,\bullet)$.

\begin{Def}
  Suppose $\XX=(X,\alpha)$ and $\YY=(Y,\beta)$ are $G$-systems. If
  $H_0\subset G^X$ and $H_1\subset G^Y$ are any subsets, we denote by
  $H_0*H_1$ the set
  $$\{h\in G^{X\cup Y}\mid h\rest_X\in H_0\land h\rest_Y\in H_1\}.$$
\end{Def}

\begin{Lemma}\label{lemma:closure}
  Suppose $\XX=(X,\alpha)$ and $\YY=(Y,\beta)$ are $G$-dynamical
  systems. Suppose $H_0\subset G^X$ and $H_1\subset G^Y$ are closed
  under $\alpha$ and $\beta$ respectively. Also assume one of the
  following:
  \begin{enumerate}
  \item $H_0$ is closed under the function
    $\hh\mapsto \hh + \beta(\hh')\rest X$ for all $\hh'\in H_1$, and
    $H_1$ is closed under the function
    $\hh'\mapsto \hh' + \beta(\hh)\rest Y$ for all $\hh\in H_0$.
    \label{lemma:closure_cond1}
  \item $H_0$ and $H_1$ are closed under the translation actions of
    $G^{X\cap Y}$ on $G^X$ and $G^Y$ respectively.
  \end{enumerate}
  Then the
  set $H:=H_0*H_1$ is closed under $\gamma=\alpha*\beta$.
\end{Lemma}
\begin{proof}
  Suppose $\hh\in H$. Then there are $\hh_0\in H_0$ and $\hh_1\in H_1$
  such that $\hh\rest X=\hh_0$ and $\hh\rest Y=\hh_1$.  So
  $\gamma(\hh)=\alpha(\hh_0)+\beta(\hh_1)$.  By the closure
  assumptions, $\alpha(\hh_0)$ and $\beta(\hh_1)$ are in $H_0$ and
  $H_1$ respectively. By the closure under the action of
  $G^{X\cap Y}$, we have
  $\alpha(\hh_0)+\beta(\hh_1)\rest_{(X\cap Y)}\in H_0$.  But since
  $\sprt(\beta(\hh_1))\subset Y$, we have
  $\alpha(\hh_0)+\beta(\hh_1)\rest_{(X\cap
    Y)}=(\alpha(\hh_0)+\beta(\hh_1))\rest_{X}=\gamma(\hh)\rest_X\in
  H_0$. Similarly obtain $\gamma(\hh)\rest_Y\in H_1$. Thus,
  $\gamma(\hh)\in H$, as required.
\end{proof}

\begin{Def}
  As a corollary to the above lemma, we can consider systems of the
  form $(H,\alpha)$ where $H\subset G^X$ is closed under $\alpha$.  In
  fact, it is enough to consider a transition function which
  is only defined on $H$, $\alpha\colon H\to H$.  Such systems
  can be coupled under the additional condition of closure under
  $G^{X\cap Y}$ as in the lemma.
\end{Def}

\MyBox{\textbf{Box 2: A more general definition of coupling}.  Let
  $(G,\bullet)$ be a magma. Let $(X,\alpha)$ and $(Y,\beta)$ be
  $G$-systems, and let $\zeta\colon A\to B$ be a bijective ``gluing
  map'' from some $A\subset X$ to some $B\subset Y$. We can then form
  the coupled system $(Z,\gamma)$ such that $Z=X\cup_\zeta Y$ is the
  set $X\sqcup Y/\sim$ where $\sim$ is the finest equivalence relation
  on the disjoint union $X\sqcup Y$ where $x\sim \zeta(x)$ for all
  $x\in A$. Category theoretically $Z$ is the pushout of $X$ and $Y$
  over $A,\id,\zeta$.  Then for a $\sim$-equivalence class $[z]$ of
  $z\in X\sqcup Y$, let
  $$\gamma(\gg)([z])=
  \begin{cases}
    \alpha(\gg)(z),&\text{ if }z\in X\setminus A\\
    \alpha(\gg)(z)\bullet\beta(\gg)(\zeta(z)),&\text{ if }z\in A\\
    \alpha(\gg)(\zeta^{-1}(z))\bullet\beta(\gg)(z),&\text{ if }z\in B\\
    \beta(\gg)(z),&\text{ if }z\in Y\setminus B.
  \end{cases}
  $$
}

\subsection{History $I$-spaces as special cases}

Recall the classical definitions of Section~\ref{ssec:Classical}.  We
can view the coupling of $(X,U,Y,h,f)$ with $(I,\f,U,Y,\pi)$ as a
special case of a coupling of $G$-dynamical systems in a number of
different ways.  We will present one way. First, without loss of
generality, we can assume that there is some fixed group $G$ such that
$X\subset G^{X'}$ for some $X'$ (if nothing else works, let $X'=X$,
$G=\Z/2\Z$ and identify $x\in X$ with the element
$\gg_x\colon X'\to G$ such that $\gg_x(x')=1\iff x'=x$),
$Y\subset G^{Y'}$, $U\subset G^{U'}$, and $\II\subset G^{\II'}$.  Let
$A=X'\sqcup Y'\sqcup U'$ and $B=\II'\sqcup Y'\sqcup U'$.  Moreover we
can assume w.l.o.g. that $0\notin Y$ and $0\notin U$ where by $0$ we
mean the neutral element of $G^{Y'}$ and $G^{U'}$ respectively.  Let
$\hat Y$ and $\hat U$ the sets $Y$ and $U$ to which the respective
neutral elements have been added.  Then
$H_0:=X\times Y\times \hat U\subset G^A$ and
$H_1:=\II\times \hat Y\times U\subset G^B$ and we can define
$\alpha\colon H_0\colon H_0$, $\beta\colon H_1\to H_1$ by
$\alpha(x,y,u)=(f(x,u),h(f(x,u)),0)$ and
$\beta(\iota,y,u)=(\f(\iota,y),0,\pi(\f(\iota,y))).$ Then the coupled
system $(H_0,\alpha)*(H_1,\beta)$ represents exactly the coupled system
of Section~\ref{ssec:Classical}.  Note that $A\cap B=Y\sqcup U$, and that
Condition~\ref{lemma:closure_cond1} of Lemma~\ref{lemma:closure} is
satisfied, so the coupling $(H_0,\alpha)*(H_1,\beta)$ is indeed
well-defined.

\subsection{Reducibility and emergence}

We say that $\{X,Y\}$ is a \emph{cover} of $Z$, if $X\cup Y=Z$.
\begin{Def}
  Let $\ZZ=(Z,\gamma)$ be a $G$-system and let $\{X,Y\}$ be a cover
  of $Z$. We say that $\ZZ$ is \emph{reducible to $\{X,Y\}$}, if there
  exist $\alpha\colon G^X\to G^X$ and $\beta\colon G^Y\to G^Y$ such
  that $\ZZ=\XX*\YY$ where $\XX=(X,\alpha)$ and $\YY=(Y,\beta)$.
  In this case we also say, that $(\XX,\YY)$ is a \emph{decomposition}
  of~$\ZZ$. We migh also say $\gamma$ is \emph{supervenient} or
  \emph{emergent} over $X,Y$, if
  $$\gamma = \gamma_1\circ\cdots\circ\gamma_k$$
  for some $\gamma_1,\ldots,\gamma_k$ which are reducible to $\{X,Y\}$.
\end{Def}


Every reducible system is emergent, but not all emergent systems are
reducible:

\begin{Thm}[Emergent, not reducible]\label{thm:Irreducible}
  There is $\ZZ=(G^Z,\gamma)$ and $X,Y\subset Z$ such that
  $\gamma$ is reducible to $X$, $Y$, but $\gamma^2$ is not
\end{Thm}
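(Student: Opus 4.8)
The plan is to understand what goes wrong when we compose a reducible transition function with itself. A transition function $\gamma$ reducible to $\{X,Y\}$ is built piecewise: on $X\setminus Y$ it agrees with $\alpha$, on $Y\setminus X$ with $\beta$, and on the overlap $X\cap Y$ it is the $\bullet$-combination $\alpha(\gg)(z)\bullet\beta(\gg)(z)$. The crucial feature is that $\alpha$ is only allowed to read the coordinates in $X$ and $\beta$ only those in $Y$; reducibility is exactly a locality/independence constraint saying the two halves cannot see each other's private coordinates. Composing $\gamma$ with itself can destroy this locality: after one application, the value at a coordinate in $X\setminus Y$ may depend on coordinates that lived in $Y\setminus X$ at the previous step, because those coordinates influenced the overlap, which in turn fed back. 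So the strategy is to engineer a concrete small example where this cross-contamination provably occurs.

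First I would choose the simplest possible data: take $G=(\Z/2\Z,+)$ (or any fixed abelian group), and let $Z=\{a,b,c\}$ with $X=\{a,b\}$ and $Y=\{b,c\}$, so that $X\cap Y=\{b\}$ is a single overlap coordinate and $a,c$ are the private coordinates of the two subsystems. A point $\gg\in G^Z$ is then a triple $(\gg(a),\gg(b),\gg(c))$. I would define $\alpha\colon G^X\to G^X$ and $\beta\colon G^Y\to G^Y$ so that the overlap coordinate $b$ genuinely mixes information: for instance have $\alpha$ write something nontrivial into the $a$-slot that depends on $b$, and have $\beta$ write into $b$ in a way that depends on $c$. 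Then $\gamma$ itself respects reducibility by construction, but I claim $\gamma^2(\gg)(a)$ will depend on $\gg(c)$, which no decomposition of $\gamma^2$ into $(X,\alpha')*(Y,\beta')$ can reproduce, since in any such decomposition the value of the coupled system at $a\in X\setminus Y$ equals $\alpha'(\gg)(a)$, a quantity that by definition depends only on $\gg\rest X=(\gg(a),\gg(b))$ and not on $\gg(c)$.

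The key steps in order are: (1) write down explicit $\alpha,\beta$ with the feedback pattern above and compute $\gamma=\alpha*\beta$ coordinatewise; (2) compute $\gamma^2=\gamma\circ\gamma$ explicitly and exhibit the $a$-coordinate of $\gamma^2(\gg)$ as a function that depends nontrivially on $\gg(c)$; (3) prove the negative direction, namely that no reducibility of $\gamma^2$ to $\{X,Y\}$ exists. For step (3) I would argue abstractly: if $\gamma^2=(X,\alpha')*(Y,\beta')$, then by the definition of coupling the restriction $\gamma^2(\gg)(a)=\alpha'(\gg\rest X)(a)$ depends only on $\gg\rest X$; hold $\gg(a),\gg(b)$ fixed and vary $\gg(c)$ to derive a contradiction with the computation in step (2). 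This dependence argument is clean precisely because $a\in X\setminus Y$, where the coupling formula has no contribution from $\beta'$ at all.

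The main obstacle will be step (1): tuning $\alpha$ and $\beta$ so that $\gamma$ is honestly reducible (automatic, since we build it that way) while guaranteeing that the iterate $\gamma^2$ moves information from $c$ all the way to $a$. With a single overlap coordinate this requires the overlap to both absorb from $c$ (via $\beta$) and emit to $a$ (via $\alpha$) across two time steps, so I must check that the two effects do not accidentally cancel under the group operation $\bullet$; over $\Z/2\Z$ cancellation is the real risk, so I would either verify noncancellation by direct computation or switch to $G=\Z$ to make the nonvanishing transparent. Everything else is a short finite verification, and the conceptual content is entirely that iteration breaks the independence-of-private-coordinates property that defines reducibility.
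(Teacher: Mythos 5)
Your proposal is correct, and it reaches the conclusion by a genuinely different (and in one respect cleaner) route than the paper. The paper takes $Z=\{0,1,2,3\}$, $X=\{0,1,2\}$, $Y=\{1,2,3\}$, $G=\Z/2\Z$, and $\gamma(b_0,b_1,b_2,b_3)=(b_0,\max\{b_0,b_2\},b_3,b_3)$; then $\gamma^2(b)=(b_0,\max\{b_0,b_3\},b_3,b_3)$, and the obstruction to reducibility sits at the \emph{overlap} coordinate $1\in X\cap Y$, where one must argue that $\max\{b_0,b_3\}$ cannot be written as $\alpha'(\gg\rest_X)(1)+\beta'(\gg\rest_Y)(1)$ over $\Z/2\Z$ (a short parity count: the four values of $\max$ sum to $1$, while any function of the form $f(b_0)+g(b_3)$ has values summing to $0$ mod $2$). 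You instead place the witness at a \emph{private} coordinate $a\in X\setminus Y$, arranging $\gamma^2(\gg)(a)$ to depend on $\gg(c)$ with $c\in Y\setminus X$; there the coupling formula has no $\beta'$ contribution at all, so irreducibility follows from pure locality with no algebraic argument about the group operation, and the cancellation worry you raise evaporates (e.g.\ take $\alpha(\gg)(a)=\gg(b)$, $\alpha(\gg)(b)=0$, $\beta(\gg)(b)=\beta(\gg)(c)=\gg(c)$, giving $\gamma(\gg)=(\gg(b),\gg(c),\gg(c))$ and $\gamma^2(\gg)(a)=\gg(c)$). Your example is smaller ($|Z|=3$ versus $4$, one overlap point versus two), works verbatim over any magma with a neutral element rather than exploiting a feature of $\Z/2\Z$, and makes the negative direction a one-line dependence argument; the paper's example buys the observation that irreducibility can already be forced at a shared coordinate, which connects more directly to its later characterization of reducibility via the dependence atom $\dep{X;X\cap Y}$. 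Both are valid; you should still carry out the explicit coordinatewise computation you outline in steps (1)--(2), since the statement requires exhibiting concrete $\alpha,\beta$.
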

\begin{proof}
  Let $Z=\{0,1,2,3\}$, $X=\{0,1,2\}$, $Y=\{1,2,3\}$,
  and $G=\Z/2\Z$. Let $\gamma$ be given by
  $$\gamma(b_0,b_1,b_2,b_3)=(b_0,\max\{b_0,b_2\},b_3,b_3).$$
\end{proof}

\subsection{Interpretation in 4E cognition}

One of the claims of the 4E approaches to cognitive
science is that cognitive systems are ``not input-output devices'',
and that dynamical systems are somehow an example of that. Consider the
quote
\begin{quote}
  \emph{The behaviour of an organism arises from the dynamics of its
    interaction with its world, and from our perspective as external
    observers we can best describe this as the interaction between two
    dynamical systems. Why is an agent in such a coupled set of
    dynamical systems not an input/output device, and hence not
    performing computations to generate appropriate outputs from
    snapshots of its sensory inputs?} \hfill I. Harvey \cite{Harvey}
\end{quote}
The author of \cite{Harvey} proceeds to give an answer but does not
ground it in mathematical theorems or mathematical definitions.

In Theorem~\ref{thm:Irreducible} we may interpret $\gamma^2$ as
looking at the dynamics on a higher temproral scale.  Then, this
theorem shows that phenomena observed on a higher temporal scale may
not be reducible to the components from which the low-scale dynamics
emerge. This confirms the intuiion of the enactivists that even though
perception is grounded in sensorimotor feedback between efferent and
afferent neural connections (which is happening on the time scale of
milliseconds), the observed phenomena of perception on the time scale
of hundreds of milliseconds might not be reducible to the
agent-environment dichotomy.

\subsection{Dependence}

The following notion of dependence is inspired by the dependence atom
of dependence logic of \cite{Vaananen2007}:

\begin{Def}\label{def:Dependence}
  Let $(X,\alpha)$ be a $G$-system. Let $A,B\subset X$
  and $\bC\subset G^X$. We say that $A$ \emph{determines} $B$
  over $\bC$, if there are no $\gg_0,\gg_1\in \bC$ such that
  $\gg_0\rest_A=\gg_1\rest_A$ and
  $\alpha(\gg_0)\rest_B\ne \alpha(\gg_1)\rest_B$.
\end{Def}

It is possible to translate this to the language of team
semantics. The set $\bC$ can be viewed as a team with columns
corresponding to elements of $X$ and rows corresponding to elements
of~$\bC$.  The set $f[\bC]$ can be similarly seen as a team in the
same way. We can horizontally concatenate them so that the columns
correspond to the elements of the dijoint union $X_0\sqcup X_1$ where
$X_0=X_1=X$ and each row $\gg\in \bC$ is continued by the row
$\alpha(\gg)$. If $A$ and $B$ are finite, then the determination
defined above can be expressed in the language of dependence logic as
$\dep{\bar a;\bar b}$ where $\bar a$ lists all elements of
$A\subset X_0$ and $\bar b$ lists all elements of $B\subset X_1$.  We
can also translate it differently so that the finiteness assumption is
not needed. Simply form a team with two columns corresponding to $A$
and $B$. Each row is of the form $(\gg\rest_A,\alpha(\gg)\rest_B)$ and
the dependence is then expressed as $\dep{A;B}$.

\begin{Notation}
  If $A$ determines $B$ over $\bC$ in $\XX=(X,\alpha)$, we denote this
  by $\XX\models_{\bC} \dep{A;B}$. If $\bC=G^X$, then we just denote
  it by $\XX\models \dep{A;B}$.
\end{Notation}



Now we will prove the decomposition theorem which justifies
the above definitions. When $G$ has a neutral element $0\in G$,
and $X\subset Z$, then identify an element $\gg\in G^X$ with the element
$\gg'\in G^Z$ with the property that
$\gg'(z)=\gg(z)$ for all $z\in X$ and $\gg'(z)=0$ otherwise. Note that
then we can rewrite the definition coupling between $\alpha\colon G^X\to G^X$
and $\beta\colon G^Y\to G^Y$ as
$(\alpha*\beta)(\gg) = \alpha(\gg\rest_X)\bullet \beta(\gg\rest_Y)$.
where the latter $\bullet$ is applied pointwise in $G^{X\cup Y}$.

\begin{Thm}[Characterization of reducibility]
  Let $(G,+)$ be an abelian group.  Suppose $\ZZ=(Z,\gamma)$ is a
  $G$-system. Suppose $\{X,Y\}$ is a cover of $Z$. Then the
  following are equivalent:
  \begin{enumerate}
  \item $\ZZ$ is reducible to $\{X,Y\}$.
    \label{thm:Decomp_c1}
  \item \label{thm:Decomp_c2} 
    For all $\gg_0,\gg_1,\gg_0',\gg_1'\in G^Z$ satisfying
    \begin{equation}
      \label{eq:thm_decomp_c2_cond}
      \gg_0\rest_Y=\gg'_0\rest_Y,\ \ \gg_1\rest_Y=\gg_1'\rest_Y,\ \ \gg_0\rest_X=\gg_1\rest_X,\ \ \text{and}\ \ \gg_0'\rest_X=\gg'_1\rest_X
    \end{equation}
    the following hold:
    \begin{align}
      \label{eq:thm_decomp_c2_dependencies1}
      & \gamma(\gg_0)=\gamma(\gg_1)\ \Longrightarrow\ \gamma(\gg_0')=\gamma(\gg_1')\\
      \label{eq:thm_decomp_c2_dependencies2}
      & \gamma(\gg_0)=\gamma(\gg'_0)\ \Longrightarrow\ \gamma(\gg_1)=\gamma(\gg_1')
    \end{align}
  \item \label{thm:Decomp_c3}
    For all $\gg_0,\gg_1,\gg_0',\gg_1'\in G^Z$ satisfying
    \eqref{eq:thm_decomp_c2_cond} the following hold:
    \begin{align}
      \label{eq:thm_decomp_c2_dependencies1_}
      & \ZZ\models_{\{\gg_0,\gg_1\}}\dep{X;X\cap Y}\ \Longrightarrow\ \ZZ\models_{\gg_0',\gg_1'}\dep{X;X\cap Y}\\
      \label{eq:thm_decomp_c2_dependencies2_}
      & \ZZ\models_{\{\gg_0,\gg_0'\}}\dep{Y;X\cap Y}\ \Longrightarrow\ \ZZ\models_{\gg_1,\gg_1'}\dep{Y;X\cap Y}
    \end{align}
  \end{enumerate}
\end{Thm}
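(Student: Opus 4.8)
The plan is to run the cycle (\ref{thm:Decomp_c1})$\Rightarrow$(\ref{thm:Decomp_c2})$\Rightarrow$(\ref{thm:Decomp_c3})$\Rightarrow$(\ref{thm:Decomp_c1}), with the two forward steps being direct readings of the coupling formula and the return step an explicit construction of a decomposition. The organising observation is that the four configurations constrained by \eqref{eq:thm_decomp_c2_cond} must all agree on $W:=X\cap Y$: from $\gg_0\rest_X=\gg_1\rest_X$, $\gg_0\rest_Y=\gg_0'\rest_Y$, $\gg_1\rest_Y=\gg_1'\rest_Y$ and $\gg_0'\rest_X=\gg_1'\rest_X$ one reads off a single common value $w\in G^{W}$. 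Writing $\gamma(p,w,q)$ for the value of $\gamma$ at the configuration that equals $p$ on $X\setminus Y$, $w$ on $W$ and $q$ on $Y\setminus X$, the hypothesis \eqref{eq:thm_decomp_c2_cond} says precisely that $(\gg_0,\gg_1,\gg_0',\gg_1')$ is the $2\times 2$ square
$$\gg_0=(p,w,q),\ \ \gg_1=(p,w,q'),\ \ \gg_0'=(p',w,q),\ \ \gg_1'=(p',w,q'),$$
whose two rows share their $X$-restriction and whose two columns share their $Y$-restriction. Everything is read off this square.

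For (\ref{thm:Decomp_c1})$\Rightarrow$(\ref{thm:Decomp_c2}), assume $\gamma=\alpha*\beta$, so that $\gamma(\gg)=\alpha(\gg\rest_X)\bullet\beta(\gg\rest_Y)$ with the two summands supported on $X$ and $Y$. On the square, $\alpha(\gg\rest_X)$ is constant along rows and $\beta(\gg\rest_Y)$ constant along columns; since $(G,+)$ is a group we may cancel, so $\gamma(\gg_0)=\gamma(\gg_1)$ holds iff $\beta(w,q)=\beta(w,q')$, a condition free of $p$. This gives \eqref{eq:thm_decomp_c2_dependencies1}, and the symmetric computation gives \eqref{eq:thm_decomp_c2_dependencies2}. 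For (\ref{thm:Decomp_c2})$\Rightarrow$(\ref{thm:Decomp_c3}) I would unwind the notation: for a two-row team agreeing on $X$, the atom $\ZZ\models_{\{\gg_0,\gg_1\}}\dep{X;X\cap Y}$ asserts exactly $\gamma(\gg_0)\rest_W=\gamma(\gg_1)\rest_W$, so \eqref{eq:thm_decomp_c2_dependencies1_}--\eqref{eq:thm_decomp_c2_dependencies2_} are the $W$-component shadow of \eqref{eq:thm_decomp_c2_dependencies1}--\eqref{eq:thm_decomp_c2_dependencies2}, and I would check that the implications transfer under this restriction.

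The heart of the proof is (\ref{thm:Decomp_c3})$\Rightarrow$(\ref{thm:Decomp_c1}). Fixing the neutral element $0$ as reference value on $X\setminus Y$ and on $Y\setminus X$, I would set
$$\alpha(p,w):=\gamma(p,w,0)\rest_X,\qquad \beta(w,q):=\gamma(0,w,q)\rest_Y-\gamma(0,w,0)\rest_W,$$
the correction term being subtracted only on the overlap so as not to double-count it; both are well defined since the right-hand sides depend only on $\gg\rest_X$, resp.\ $\gg\rest_Y$. Substituting into the coupling formula, the identity $\gamma=\alpha*\beta$ decouples into three pointwise obligations: on $X\setminus Y$ it asks that $\gamma(p,w,q)\rest_{X\setminus Y}$ be independent of $q$; on $Y\setminus X$, dually, that it be independent of $p$; and on $W$ it asks for the additive identity $\gamma(p,w,q)\rest_W=\gamma(p,w,0)\rest_W+\gamma(0,w,q)\rest_W-\gamma(0,w,0)\rest_W$, i.e.\ the vanishing of the mixed second difference of $\gamma\rest_W$ around the square.

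The main obstacle is the overlap. Establishing the two independence obligations should follow by feeding well-chosen squares into the implications of (\ref{thm:Decomp_c2})/(\ref{thm:Decomp_c3}); the delicate part is the $W$-component, where I must upgrade the \emph{equality-preservation} guaranteed by the hypotheses to the genuinely \emph{additive} mixed-difference identity. This is exactly the step that consumes the abelian group structure: without inverses and commutativity one can only say \emph{when} two images coincide, not \emph{how} their differences combine, so the separable form $\gamma\rest_W=f(p,w)+g(w,q)$ cannot be reached by equalities alone. Concretely, I would first use (\ref{thm:Decomp_c3}) to show that, for each fixed $w$, the partition of the $q$-coordinates induced by $q\mapsto\gamma(p,w,q)\rest_W$ does not depend on $p$ (and dually in $p$), and then exploit the translation action of $G^{W}$ on $G^X$ and $G^Y$ to promote this $p$-uniform partition to an honest additive splitting. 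Pinning down this promotion, from coincidence-of-images to additivity, is the crux on which the whole equivalence turns.
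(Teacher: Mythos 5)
The paper's own proof of this theorem is marked ``TBA'', so there is nothing to compare your route against; I can only assess the proposal on its own terms. Your $2\times 2$ square normal form for tuples satisfying \eqref{eq:thm_decomp_c2_cond} is correct, and the direction (\ref{thm:Decomp_c1})$\Rightarrow$(\ref{thm:Decomp_c2}) (likewise (\ref{thm:Decomp_c1})$\Rightarrow$(\ref{thm:Decomp_c3})) goes through exactly as you describe, by cancelling the row-constant summand $\alpha(\gg\rest_X)$ in the abelian group. One caveat already here: (\ref{thm:Decomp_c2})$\Rightarrow$(\ref{thm:Decomp_c3}) is not a mere ``$W$-component shadow''. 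Condition (\ref{thm:Decomp_c2}) preserves \emph{full} equality of images while (\ref{thm:Decomp_c3}) preserves equality of their restrictions to $X\cap Y$; with $\gamma(p,w,q)=(q,\,pq,\,p)$ on $Z=\{0,1,2\}$, $X=\{0,1\}$, $Y=\{1,2\}$, $G=\Z/2\Z$ one checks that (\ref{thm:Decomp_c2}) holds but (\ref{thm:Decomp_c3}) fails, so this arrow needs to be routed through (\ref{thm:Decomp_c1}) or abandoned.

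The fatal problem is the step you yourself flag as the crux: upgrading equality-preservation to additive separability. This cannot be done, because (\ref{thm:Decomp_c2}) and (\ref{thm:Decomp_c3}) are strictly weaker than reducibility. Take $G=\Z/2\Z$, $Z=\{0,1,2\}$, $X=\{0,1\}$, $Y=\{1,2\}$ and $\gamma(\gg)=(\gg(0)+\gg(2),\,0,\,0)$. Since $\gamma(\gg)\rest_{X\cap Y}=0$ identically, every atom $\dep{\,\cdot\,;X\cap Y}$ holds and (\ref{thm:Decomp_c3}) is vacuously true. For (\ref{thm:Decomp_c2}), on any square the premise $\gamma(\gg_0)=\gamma(\gg_1)$ together with $\gg_0\rest_X=\gg_1\rest_X$ forces $\gg_0(2)=\gg_1(2)$, hence $\gg_0=\gg_1$ and therefore $\gg_0'=\gg_1'$; the second implication is symmetric, so (\ref{thm:Decomp_c2}) holds. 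Yet $\gamma$ is not reducible: $0\in X\setminus Y$, so reducibility would force $\gamma(\gg)(0)=\alpha(\gg\rest_X)(0)$, but $\gamma(\gg)(0)$ depends on $\gg(2)\notin X$. Note that this kills not only the $W$-obligation but already the two ``independence obligations'' on $X\setminus Y$ and $Y\setminus X$ that you hoped would ``follow by feeding well-chosen squares'' into the hypotheses: equality-preservation only says that the partition of configurations induced by $\gamma$ is uniform across the square, not that the values themselves are separable. Your reference-point construction of $\alpha$ and $\beta$ (with the overlap correction subtracted on $W$) is the right sufficiency argument for the \emph{correct} hypotheses, namely $\ZZ\models\dep{X;X\setminus Y}$, $\ZZ\models\dep{Y;Y\setminus X}$, and the vanishing of the mixed second difference of $\gamma\rest_{X\cap Y}$ over every square; the productive move is to replace conditions (\ref{thm:Decomp_c2})--(\ref{thm:Decomp_c3}) by these, rather than to keep searching for the missing promotion from coincidence-of-images to additivity.
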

\begin{proof}
  TBA
\end{proof}

\section{Intervention and causality}

These considerations open doors also to the study of causality in
$G$-systems. This is expected to be an important part of cognitive modelling. A key
notion is that of \emph{intervention} and it can be defined directly
using the following substitution notation: Given $\gg\in G^X$ and
$\ss\in G^S$ for some $S\subset X$, define
\begin{equation}
  \label{eq:Substitution}
  \gg[\ss]=\gg[\ss/S]\in G^X\qquad\text{ as }\qquad \gg[\ss/S](x)=
  \begin{cases}
    \gg(x),&\text{ if } x\notin S\\
    \ss(x),&\text{ if } x\in S.
  \end{cases}
\end{equation}
We can use~\eqref{eq:Substitution} to replace the ``$\mathtt{do}$''
operator of \cite{Pearl2009} and define:
\begin{Def}[Causality]
  Let $\XX=(G^X,\alpha)$ be a $G$-system. Let $A,B\subset X$, and
  $\bC\subset G^X$. We say that $A$ \emph{causally influences $B$ in $\bC$},
  denoted $\XX\models_{\bC} c(A,B)$, if for all $\gg\in \bC$ there
  is $\ss\in G^A$ such that
  $\alpha(\gg)\rest_B\ne \alpha(\gg[\ss/A])\rest_B$.
\end{Def}
This enables one define, for example, when an agent has a causal
influence on environmental variables and vice versa. The potential
challenge is that according to E-cognition agent and environment form
closed causal loops each influencing each other while most (but not
all) of the theory of causality is centered around directed
\emph{acyclic} graphs. See Question set~1.










\bibliographystyle{plain}
\bibliography{processed_ref}

\begin{thebibliography}{10}

\bibitem{Beer2014b}
R.~D. Beer.
\newblock Dynamical systems and embedded cognition.
\newblock In Keith Frankish and William~M.Editors Ramsey, editors, {\em The
  Cambridge Handbook of Artificial Intelligence}, page 128–148. Cambridge
  University Press, 2014.

\bibitem{Beer2023}
R.~D. Beer.
\newblock On the proper treatment of dynamics in cognitive science.
\newblock {\em Topics in Cognitive Science}, 2023.

\bibitem{Brooks1991}
R.~A. Brooks.
\newblock Intelligence without representation.
\newblock {\em Artificial Intelligence}, 47(1):139--159, 1991.

\bibitem{Buhrmann2013}
T.~Buhrmann, E.~A. Di~Paolo, and X.~Barandiaran.
\newblock A dynamical systems account of sensorimotor contingencies.
\newblock {\em Frontiers in Psychology}, 4, 2013.

\bibitem{SML2017}
E.~A. Di~Paolo, T.~Buhrmann, and X.~Barandiaran.
\newblock {\em Sensorimotor Life}.
\newblock Oxford University Press, 2017.

\bibitem{Gallagher2017}
S.~Gallagher.
\newblock {\em Enactivist Interventions: Rethinking the Mind}.
\newblock Oxford University Press, 2017.

\bibitem{Gibson1979}
J.~J. Gibson.
\newblock {\em The Ecological Approach to Visual Perception: Classic Edition}.
\newblock Houghton Mifflin, 1979.

\bibitem{Harvey}
I.~Harvey.
\newblock Cognition is not computation; evolution is not optimisation.
\newblock In Wulfram Gerstner, Alain Germond, Martin Hasler, and Jean-Daniel
  Nicoud, editors, {\em Artificial Neural Networks --- ICANN'97}, pages
  685--690, Berlin, Heidelberg, 1997. Springer Berlin Heidelberg.

\bibitem{Hipolito2022a}
I.~Hipólito.
\newblock Cognition without neural representation: Dynamics of a complex
  system.
\newblock {\em Frontiers in Psychology}, 12, 2022.

\bibitem{HuttoMyin2013}
D.~D. Hutto and E.~Myin.
\newblock {\em Radicalizing Enactivism: Basic Minds without Content}.
\newblock MIT Press, Cambridge, MA, 2013.

\bibitem{Jordanous2020}
A.~Jordanous.
\newblock Intelligence without representation: A historical perspective.
\newblock {\em Systems}, 8:31, 2020.

\bibitem{Kauffman2000}
S.~Kauffman.
\newblock {\em Investigations}.
\newblock Oxford University Press, 2000.

\bibitem{Newman2024}
S.~A. Newman.
\newblock Form, function, mind: What doesn't compute (and what might).
\newblock {\em Biochemical and Biophysical Research Communications}, 721, 2024.

\bibitem{Noe2004}
A.~No{\"e}.
\newblock {\em Action in Perception}.
\newblock A Bradford book. MIT Press, 2004.

\bibitem{ORegan2001}
J.~K. O’Regan and A.~Noë.
\newblock A sensorimotor account of vision and visual consciousness.
\newblock {\em Behavioral and Brain Sciences}, 24(5):939–973, 2001.

\bibitem{Pearl2009}
J.~Pearl.
\newblock {\em Causality: Models, Reasoning, and Inference}.
\newblock Cambridge University Press, 2009.

\bibitem{PrinzBarsalou2000}
J.~J. Prinz and L.~W. Barsalou.
\newblock Steering a course for embodied representation.
\newblock In Eric Dietrich and Arthur~B. Markman, editors, {\em Cognitive
  Dynamics: Conceptual Change in Humans and Machines}, pages 51--77. Lawrence
  Erlbaum Assoc., Mahwah, NJ, 2000.

\bibitem{Sakcak2024}
B.~Sakcak, \underline{V. K. Weinstein}, K.~G. Timperi, and S.~M. LaValle.
\newblock Minimally sufficient structures for information-feedback policies.
\newblock In {\em Proceedings of the 16th International Workshop on the
  Algorithmic Foundations of Robotics (WAFR)}, Chicago, IL, USA, 2024.

\bibitem{Weinstein_WAFR}
\underline{V. K. Weinstein}, T.~Alshammari, K.~G. Timperi, M.~Bennis, and S.~M.
  LaValle.
\newblock An internal model principle for robots.
\newblock In {\em Proceedings of the 16th International Workshop on the
  Algorithmic Foundations of Robotics (WAFR)}, Chicago, IL, USA, 2024.

\bibitem{Weinstein_CCN}
\underline{V. K. Weinstein}, F.~Georgiev, K.~G. Timperi, N.~Prencipe, and S.~M.
  LaValle.
\newblock Emergent symmetry in a finite model of navigational neurons.
\newblock In {\em Proceedings of the Cognitive Computational Neuroscience
  Conference (CCN)}, 2024.

\bibitem{Weinstein_EMS}
\underline{V. K. Weinstein} and S.~M. LaValle.
\newblock Equivalent environments and covering spaces for robots.
\newblock In Michael Farber and José González, editors, {\em Topology,
  Geometry, and AI}, pages 31--64. EMS Press, 2024.

\bibitem{Weinstein_FNR}
\underline{V. K. Weinstein}, B.~Sakcak, and S.~M. LaValle.
\newblock An enactivist-inspired mathematical model of cognition.
\newblock {\em Frontiers in Neurorobotics}, 16, 2022.

\bibitem{van_Gelder1998}
T.~van Gelder.
\newblock The dynamical hypothesis in cognitive science.
\newblock {\em Behavioral and Brain Sciences}, 21(5):615–628, 1998.

\bibitem{Varela1992}
F.~J. Varela, E.~Rosch, and E.~Thompson.
\newblock {\em The Embodied Mind: Cognitive Science and Human Experience}.
\newblock MIT Press, 1991.

\bibitem{Vaananen2007}
J.~Väänänen.
\newblock {\em Dependence Logic: A New Approach to Independence Friendly
  Logic}, volume~70 of {\em London Mathematical Society Student Texts}.
\newblock Cambridge University Press, Cambridge, 2007.

\end{thebibliography}

\end{document}